\theoremstyle{plain}
\newtheorem{theorem}{\bf Theorem}[section]
\newtheorem{corollary}[theorem]{Corollary}
\newtheorem{lemma}[theorem]{Lemma}
\newtheorem{proposition}[theorem]{Proposition}
\newtheorem{question}[theorem]{Question}
\theoremstyle{definition}
\newtheorem{remark}[theorem]{Remark}
\newtheorem{definition}[theorem]{Definition}
\newtheorem{example}[theorem]{Example}
\newcommand{\depth}{\operatorname{depth} }
\newcommand{\grade}{\operatorname{grade} }
\newcommand{\pdim}{\operatorname{projdim} }
\newcommand{\chara}{\operatorname{char} }
\newcommand{\QQ}{\mathbb{Q}}
\newcommand{\N}{\mathbb{N}}
\newcommand{\ZZ}{\mathbb{Z}}
\newcommand{\NN}{\mathbb{N}}
\newcommand{\nn}{\mathfrak{n}}
\newcommand{\mm}{\mathfrak{m}}
\newcommand{\height}{\operatorname{height} }
\begin{document}

\title{When does depth stabilize early on?}
\author{
Le Dinh Nam \thanks{Supported by VIASM-2014}\\
\small School of Applied Mathematics and Informatics,\\
\small Hanoi University of Science and Technology\\
\small \url{nam.ledinh@hust.edu.vn}
\and 
Matteo Varbaro \thanks{Supported by PRIN  2010S47ARA\_003 ``Geometria delle Variet\`a Algebriche".} \\
\small Dipartimento di Matematica, \\ Universit\`a di Genova \\
\small \url{varbaro@dima.unige.it}
\and
{\it To Ngo Viet Trung on his 60th birthday}
}
\date{}
\maketitle

\begin{abstract}
\noindent In this paper we study graded ideals $I$ in a polynomial ring $S$ such that the numerical function $k\mapsto \depth(S/I^k)$ is constant. We show that, if (i) the Rees algebra of $I$ is Cohen-Macaulay, (ii) the cohomological dimension of $I$ is not larger than the projective dimension of $S/I$ and (iii) the $K$-algebra generated by some homogeneous generators of $I$ is a direct summand of $S$, then $\depth(S/I^k)$ is constant. All the ideals with constant depth-function discovered by Herzog and Vladoiu in \cite{HV} satisfy the criterion given
above. In the contest of square-free monomial ideals, there is a chance that a converse of the previous fact holds true.
\end{abstract}

\section{Introduction}

Let $S$ be a polynomial ring in $n$ variables over a field $K$, and $I\subseteq S$ a homogeneous ideal. In this paper we study the {\it depth-function} of $I$:
\[k\mapsto \depth(S/I^k).\]
By a classical result of Brodmann \cite{Bro}, there exists $k_0\in\NN$ such that 
\[\depth(S/I^k)=\depth(S/I^{k_0}) \ \ \ \forall \ k\geq k_0.\]
In other words, depth-functions are definitely constant. Though, their initial behavior is hard to understand (for example see \cite{HH}). The purpose of this work is to inquire on ideals with constant depth-function, i.e. such that
\[\depth(S/I^k)=\depth(S/I) \ \ \ \forall \ k\geq 1.\]
Note that, if $\dim(S/I)=0$, then the depth-function of $I$ is obviously constant. On the other hand, if we assume that $I$ is {\it radical} the situation is much more rigid: if $S/I$ is Cohen-Macaulay and $I$ is radical, then $I$ has constant depth-function if and only if $I$ is a complete intersection, by a result of Cowsik and Nori in \cite{CN}. The main result of the present paper is Theorem \ref{thm:main}, where a class of ideals with constant depth-function is identified. Precisely, we show that, if $I$ is a homogeneous ideal of $S$ generated by $f_1,\ldots ,f_r$ such that:
\begin{itemize}
\item[(i)] the Rees algebra of $I$ is Cohen-Macaulay;
\item[(ii)] $H_I^i(S)=0$ for any $i>\pdim(S/I)$;
\item[(iii)] $A=K[f_1,\ldots ,f_r]$ is a direct summand of $S$ (as an $A$-module);
\end{itemize}
then the depth-function of $I$ is constant.
The above hypotheses are interesting by themselves. We note that (ii) is satisfied by a broad class of ideals in Proposition \ref{prop:notsorare} and that (iii), which at first sight might seem stronger than (i), does not imply the latter in general (Example \ref{ex:no}).

In the last section we restrict ourselves to consider monomial ideals. For this kind of ideals condition (ii) is automatically satisfied, whereas (i) and (iii) are still independent, as shown by Example \ref{ex:no}. We discuss when $A=K[u_1,\ldots ,u_r]$ is a direct  summand of $S$ when $u_1,\ldots ,u_r$ are monomial generators of $I$, and especially we report a characterization of when $A$ is an algebra retract of $S$, that we learnt on MathOverflow (Lemma \ref{lem:retract}). The problem of characterizing monomial ideals with constant depth-function was already addressed by Herzog and Vladoiu in \cite{HV}, where they provided large classes of square-free monomial ideals with constant depth-function. All such ideals satisfy the assumptions of Theorem \ref{thm:main}. They also gave examples of square-free monomial ideals with constant depth-function lying outside the class they introduced. As it turns out, also the ideals of such examples satisfy the assumptions of Theorem \ref{thm:main}. It is therefore worth to ask whether Theorem \ref{thm:main} can be reversed for square-free monomial ideals. Also, we could not find any square-free monomial ideal satisfying (iii) but not (i). All this is discussed after Lemma \ref{lem:retract}. 

\vskip 2mm

{\it Acknowledgments}: We wish to thank the anonymous referee for carefully reading the paper and giving fundamental advice.

\section{Basics on blow-up algebras}

Let $S=K[x_1,\ldots ,x_n]$ be a polynomial ring in $n$ variables over a field $K$, $\mm$ be the maximal irrelevant ideal of $S$, and $I$ a homogeneous ideal of $S$. We will intensively work with the following {\it blow-up algebras}:
\begin{itemize}
\item The {\it Rees algebra} of $I$, $R(I):=\bigoplus_{k\geq 0}I^k$.
\item The {\it associated graded ring} of $I$, $G(I):=\bigoplus_{k\geq 0}I^k/I^{k+1}$.
\item The {\it fiber cone} of $I$, $F(I):=\bigoplus_{k\geq 0}I^k/\mm I^k\cong G(I)/\mm G(I)$.
\end{itemize}
(All the direct sums are taken as $S$-modules). Recall that $\dim(R(I))=n+1$, $\dim(G(I))=n$ and $\ell(I):=\dim(F(I))$ is called the {\it analytic spread} of $I$.

\begin{remark}
Notice that, if $I=(f_1,\ldots ,f_r)$ where the $f_i$'s are forms of the same degree, then
\[F(I)\cong K[f_1,\ldots ,f_r].\]
\end{remark}

Many properties of the powers of $I$ are reflected by the blow-up algebras. In this paper we are interested in studying the {\it depth-function} of $I$:
\[k\mapsto \depth(S/I^k),\]
so let us see how to relate the depth-function with the blow-up algebras: Notice that, for all $i\geq 0$, we have isomorphism of $S$-modules:
\[H_{\mm G(I)}^i(G(I))\cong H_{\mm}^i(G(I))\cong \bigoplus_{k\geq 0}H_{\mm}^i(I^k/I^{k+1}),\]
where the first isomorphism follows by the independence of the base in computing local cohomology, while the second one holds true because local cohomology commutes with direct sums. Consequently:
\[\grade(\mm G(I),G(I))=\min_{k\geq 0}\{\depth(I^k/I^{k+1})\}=\min_{k\geq 1}\{\depth(S/I^k)\},\]
where the last equality follows from the short exact sequences 
\[0\rightarrow I^k/I^{k+1}\rightarrow S/I^{k+1}\rightarrow S/I^k\rightarrow 0 \ \ \ \mbox{where } \ k\geq 0.\] 
Since $\grade(\mm G(I),G(I))\leq \height(\mm G(I))$, it follows an inequality due to Burch in \cite{Bu}:
\[\ell(I)\leq n-\min_{k\geq 1}\{\depth(S/I^k)\}.\]
If $G(I)$ is Cohen-Macaulay, then $\grade(\mm G(I),G(I))= \height(\mm G(I))$, so that:
\begin{equation}\label{eq:eh1}
\ell(I) = n-\min_{k\geq 1}\{\depth(S/I^k)\}.
\end{equation}
The equality above is due to Eisenbud and Huneke in \cite[Proposition 3.3]{EH}. The argument used there is different from the above one, yielding the following interesting further property (under the assumption $G(I)$ is Cohen-Macaulay):
\begin{equation}\label{eq:eh2}
\depth(S/I^s)=\min_{k\geq 1}\{\depth(S/I^k)\}\implies \depth(S/I^{s+1})=\depth(S/I^s).
\end{equation}
In view of the above discussion, it is relevant to our purposes to understand when $G(I)$ is Cohen-Macaulay. Therefore, let us recall the following beautiful result of Lipman \cite[Theorem 5]{Li}:

\begin{theorem}\label{thm:lipman}
$G(I)$ is Cohen-Macaulay if and only if $R(I)$ is Cohen-Macaulay.
\end{theorem}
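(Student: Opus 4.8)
The plan is to route everything through the \emph{extended Rees algebra} $\mathcal{R}:=S[It,t^{-1}]=\bigoplus_{k\in\ZZ}I^kt^k$ (with the convention $I^k=S$ for $k\le 0$), which simultaneously controls $R(I)$ and $G(I)$ and lets me treat both implications uniformly. This is a Noetherian $\ZZ$-graded ring of dimension $n+1$ with maximal graded ideal $\mathfrak{M}$ and $\mathcal{R}/\mathfrak{M}\cong K$; I will detect the Cohen-Macaulay property through the graded local cohomology modules $H^i_{\mathfrak{M}}(-)$, using that a finitely generated graded module $M$ over $\mathcal{R}$ (resp. over $R(I)$) is Cohen-Macaulay exactly when $H^i_{\mathfrak{M}}(M)=0$ for all $i<\dim M$. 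The two structural facts I will exploit are that $t^{-1}$ is a nonzerodivisor on $\mathcal{R}$ lying in $\mathfrak{M}$, with $\mathcal{R}/(t^{-1})\cong G(I)$, and that $R(I)=\bigoplus_{k\ge 0}I^kt^k$ sits inside $\mathcal{R}$ as its nonnegative part.

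For the first reduction, since $t^{-1}\in\mathfrak{M}$ is a nonzerodivisor and $\mathcal{R}/(t^{-1})\cong G(I)$, the usual depth lemma gives $\depth_{\mathfrak{M}}\mathcal{R}=\depth_{\mathfrak{M}}G(I)+1$. As $\dim\mathcal{R}=n+1=\dim G(I)+1$, this yields at once that $\mathcal{R}$ is Cohen-Macaulay if and only if $G(I)$ is. For the second reduction I compare $R(I)$ with $\mathcal{R}$ through the short exact sequence of graded $\mathcal{R}$-modules
\[0\longrightarrow R(I)\longrightarrow \mathcal{R}\longrightarrow N\longrightarrow 0,\qquad N:=\mathcal{R}/R(I)\cong\bigoplus_{k\le -1}St^k,\]
and pass to the associated long exact sequence in $H^\bullet_{\mathfrak{M}}$. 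Everything then hinges on the cohomology of the tail module $N$.

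The key computation is that $H^i_{\mathfrak{M}}(N)=0$ for all $i\le n$. Indeed, every element of $N$ is annihilated by a power of each generator $f_jt$ of $\mathcal{R}_+$ (multiplying $st^k$ by $(f_jt)^m$ lands in $R(I)$ once $m\ge -k$), so $N$ is $\mathcal{R}_+$-torsion and the localizations $N_{f_jt}$ vanish; hence the \v{C}ech complex computing $H^\bullet_{\mathfrak{M}}(N)$ collapses to the one built from $x_1,\dots,x_n,t^{-1}$ only, giving $H^i_{\mathfrak{M}}(N)\cong H^i_{(\mm,t^{-1})}(N)$. Now $N\cong t^{-1}S[t^{-1}]$ is free of rank one over the polynomial ring $S[t^{-1}]$ in $n+1$ variables, and $(\mm,t^{-1})$ is its irrelevant maximal ideal, so this local cohomology vanishes except in degree $n+1$. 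Feeding $H^i_{\mathfrak{M}}(N)=0$ and $H^{i-1}_{\mathfrak{M}}(N)=0$ (both valid for $i\le n$) into the long exact sequence produces isomorphisms $H^i_{\mathfrak{M}}(R(I))\cong H^i_{\mathfrak{M}}(\mathcal{R})$ for every $i\le n$, whence $R(I)$ is Cohen-Macaulay if and only if $\mathcal{R}$ is; combining the two reductions gives the theorem in both directions. I expect the delicate point to be precisely the analysis of $N$: one must notice that $N$ is $\mathcal{R}_+$-torsion but \emph{not} $t^{-1}$-torsion, so that the surviving cohomology is governed by the regular sequence $x_1,\dots,x_n,t^{-1}$ and is concentrated in degree $n+1$, safely outside the range $i\le n$ that detects Cohen-Macaulayness. (Alternatively, the implication that $R(I)$ Cohen-Macaulay forces $G(I)$ Cohen-Macaulay can be obtained more directly from the sequences $0\to R(I)_+\to R(I)\to S\to 0$ and $0\to R(I)_+(1)\to R(I)\to G(I)\to 0$, but the route through $\mathcal{R}$ has the advantage of settling both implications simultaneously.)
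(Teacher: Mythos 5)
The paper does not actually prove this statement; it simply quotes it from Lipman \cite{Li}, so the only comparison available is with the known proof of that theorem. Your attempt has a fatal gap. The first reduction is fine: $t^{-1}$ is a nonzerodivisor on $\mathcal{R}=S[It,t^{-1}]$ with $\mathcal{R}/(t^{-1})\cong G(I)$, so $\mathcal{R}$ is Cohen-Macaulay if and only if $G(I)$ is. The problem is the second reduction: $R(I)$ is \emph{not} an $\mathcal{R}$-submodule of $\mathcal{R}$, since $t^{-1}\cdot 1=t^{-1}\notin R(I)$ (multiplication by $t^{-1}$ sends the degree-$0$ piece $S\subseteq R(I)$ onto $St^{-1}$, which meets $R(I)$ only in $0$). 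Hence there is no short exact sequence of graded $\mathcal{R}$-modules $0\to R(I)\to\mathcal{R}\to N\to 0$, and you cannot apply $H^{\bullet}_{\mathfrak{M}}(-)$ to it. If you instead read it as a sequence of $R(I)$-modules and work with the maximal graded ideal $\mathfrak{N}=(\mm,It)$ of $R(I)$, two things break at once: $\mathfrak{N}\mathcal{R}$ is not $\mathfrak{M}$-primary (its radical does not contain $t^{-1}$), so $H^{i}_{\mathfrak{N}}(\mathcal{R})$ no longer detects the Cohen-Macaulayness of $\mathcal{R}$; and $N=\bigoplus_{k\leq -1}St^{k}$ is then merely a direct sum of shifted copies of $S$ over $\mathfrak{N}$, so $H^{n}_{\mathfrak{N}}(N)\cong\bigoplus_{k\leq -1}H^{n}_{\mm}(S)t^{k}\neq 0$ and the isomorphisms fail exactly in the critical degrees $i=n,\,n+1$.

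A decisive sanity check: your argument only uses that $S$ is Cohen-Macaulay (to get $H^{i}_{(\mm,t^{-1})}(S[t^{-1}])=0$ for $i\leq n$), but the statement is false over non-regular Cohen-Macaulay bases. For $S=K[[x,y]]/(x^{2}-y^{3})$ and $I=\mm$ one has $G(I)\cong K[X,Y]/(X^{2})$ Cohen-Macaulay while $R(I)$ is not: by the Trung--Ikeda criterion, $R(I)$ is Cohen-Macaulay if and only if $G(I)$ is Cohen-Macaulay \emph{and} $a(G(I))<0$, and here $a(G(I))=0$. So any correct proof of the implication ``$G(I)$ Cohen-Macaulay $\Rightarrow$ $R(I)$ Cohen-Macaulay'' must use the regularity of the polynomial ring in an essential way; that is precisely the content of Lipman's theorem, whose proof shows that over a regular (more generally, pseudo-rational) base the obstruction $a(G(I))\geq 0$ cannot occur when $G(I)$ is Cohen-Macaulay. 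Your parenthetical remark about deducing ``$R(I)$ Cohen-Macaulay $\Rightarrow$ $G(I)$ Cohen-Macaulay'' from the sequences $0\to R(I)_{+}\to R(I)\to S\to 0$ and $0\to R(I)_{+}(1)\to R(I)\to G(I)\to 0$ is the standard correct argument for that one direction, but the converse cannot be obtained by the formal device you propose.
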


\section{The general result}

\begin{lemma}\label{lem:inequality}
Let $I=(f_1,\ldots ,f_r)\subseteq S$ be a homogeneous ideal, $A$ the $K$-subalgebra of $S$ generated by $f_1,\ldots ,f_r$ and $\nn=I\cap A$. Then we have
\[\dim(A)=\height(\nn)\geq \dim(F(I)).\]
\end{lemma}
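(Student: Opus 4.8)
The plan is to split the statement into the equality $\dim(A)=\height(\nn)$, which is soft, and the inequality $\height(\nn)\geq \dim(F(I))$, which carries the real content. First I would record that, since the $f_i$ are forms of positive degree, $A=K[f_1,\ldots ,f_r]$ is a finitely generated $\NN$-graded $K$-algebra with $A_0=K$ (the grading being inherited from the standard grading of $S$), and that it is a domain because $A\subseteq S$. I would then check that $\nn=I\cap A$ coincides with the irrelevant maximal ideal $A_+=\bigoplus_{d>0}A_d=(f_1,\ldots ,f_r)A$: the inclusion $A_+\subseteq I\cap A$ is immediate, and conversely $I\cap A$ is a homogeneous ideal of $A$ whose degree-$0$ part lies in $K\cap I=0$. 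Once $\nn=A_+$ is established, the equality $\dim(A)=\height(\nn)$ is the standard fact that the dimension of a graded $K$-algebra with $A_0=K$ equals the height of its irrelevant maximal ideal.

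For the inequality I would use that $\dim(A)=\operatorname{trdeg}_K\operatorname{Frac}(A)$ equals the maximal size of an algebraically independent subset of the generators $f_1,\ldots ,f_r$, and I would describe $F(I)$ analogously. Since $F(I)=\bigoplus_k I^k/\mm I^k$ is generated as a $K$-algebra by the classes $\bar f_1,\ldots ,\bar f_r$ lying in $I/\mm I=F(I)_1$, I would choose a minimal prime $\mathfrak{q}$ of $F(I)$ with $\dim(F(I)/\mathfrak{q})=\dim(F(I))$; then the images of the $\bar f_i$ generate the domain $F(I)/\mathfrak{q}$, so $\ell(I)=\operatorname{trdeg}_K\operatorname{Frac}(F(I)/\mathfrak{q})$ is realized by $\ell(I)$ of these images, which are therefore algebraically independent already in $F(I)$. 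It thus suffices to find $\ell(I)$ of the generators whose images in $F(I)$ are algebraically independent, and to transport this independence back to $S$.

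The heart of the proof is therefore the claim: if $\bar f_{i_1},\ldots ,\bar f_{i_s}$ are algebraically independent in $F(I)$, then $f_{i_1},\ldots ,f_{i_s}$ are algebraically independent in $S$. I would argue by contraposition. Given a nonzero polynomial relation $P(f_{i_1},\ldots ,f_{i_s})=0$ in $S$, decompose $P=\sum_k P_k$ into its homogeneous components and let $k_0$ be the least index with $P_{k_0}\neq 0$; since the constant term must vanish (it lies in $K\cap I=0$), one has $k_0\geq 1$. The crucial point is that $P_{k_0}(f)=-\sum_{k>k_0}P_k(f)$ lies in $\sum_{k>k_0}I^k=I^{k_0+1}\subseteq \mm I^{k_0}$, so the nonzero homogeneous polynomial $P_{k_0}$ witnesses a nontrivial algebraic relation $\overline{P_{k_0}(f)}=0$ in $F(I)_{k_0}$. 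Contrapositively, the $\ell(I)$ algebraically independent classes $\bar f_{i_j}$ lift to algebraically independent elements $f_{i_j}\in A$, whence $\dim(A)\geq \ell(I)=\dim(F(I))$.

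The main obstacle is exactly this last claim: the naive idea of producing a surjection $A\twoheadrightarrow F(I)$ fails, because $A$ and $F(I)$ are graded quotients of $K[y_1,\ldots ,y_r]$ with respect to two genuinely different gradings (the $f_i$ may have different degrees, so $y_i\mapsto f_i$ respects the weight grading while $y_i\mapsto \bar f_i$ respects the standard grading), and a relation holding in $A$ need not be homogeneous for the grading of $F(I)$. Passing to the lowest-degree component $P_{k_0}$ and invoking $I^{k_0+1}\subseteq \mm I^{k_0}$ is precisely the device that repairs this grading mismatch, converting an arbitrary relation into a homogeneous one that survives in the fiber cone.
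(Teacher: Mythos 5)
Your proof is correct, but it takes a genuinely different route from the paper's. The paper handles the inequality by a Hilbert function comparison: it forms the associated graded ring $R=\bigoplus_{k\geq 0}\nn^k/\nn^{k+1}$, whose dimension is $\height(\nn)$, and observes that for each $k$ both $\nn^k/\nn^{k+1}$ and $I^k/\mm I^k$ are spanned by the images of the products $f_{i_1}\cdots f_{i_k}$, while any $K$-linear combination of such products lying in $\nn^{k+1}$ also lies in $\mm I^k$; this gives surjections $\nn^k/\nn^{k+1}\twoheadrightarrow I^k/\mm I^k$ of $K$-vector spaces, hence $\dim_K(\nn^k/\nn^{k+1})\geq\dim_K(I^k/\mm I^k)$ for all $k$ and therefore $\height(\nn)=\dim(R)\geq\dim(F(I))$. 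Your argument replaces this with a transcendence-degree count: you extract $\ell(I)$ algebraically independent degree-one classes $\bar f_{i_j}$ in $F(I)$ and lift their independence to $A$ by passing to the lowest-degree homogeneous component $P_{k_0}$ of a putative relation and using $I^{k_0+1}\subseteq\mm I^{k_0}$. The two are really dual faces of the same phenomenon --- the paper's vector-space surjections assemble into a surjection of standard graded $K$-algebras $R\twoheadrightarrow F(I)$, and your lowest-form device is the element-level manifestation of that map --- but the paper's version is shorter and dispenses with the minimal-prime and transcendence-basis bookkeeping, whereas yours is more elementary in that it never introduces the associated graded ring of $(A,\nn)$ or Hilbert functions. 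Your treatment of the equality $\dim(A)=\height(\nn)$ (via $\nn=A_+$ being the irrelevant graded maximal ideal) also differs from the paper's (which uses that all maximal ideals of an affine domain have the same height), but both are standard and correct.
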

\begin{proof}
The first equality follows just because $\nn$ is a maximal ideal of $A$ and in a domain which is a finitely generated $K$-algebra all the maximal ideals have the same height. 

If $R=\bigoplus_{k\geq 0}\nn^k/\nn^{k+1}$ is the associated graded ring of $A$ with respect to $\nn$, then $\dim(R)=\height(\nn)$. We are going to show that the Hilbert function of $R$ evaluated at $k$ is at least as the Hilbert function of $F(I)$ evaluated at $k$ for each $k\in\NN$.

The $K$-vector space $\nn^k/\nn^{k+1}$ is generated by the (images of the) elements 
\[f_{i_1}\cdots f_{i_k}, \ \ \ 1\leq i_1,\ldots ,i_k\leq r.\](The images of) such elements also generate the $K$-vector space $I^k/\mm I^k$.
Furthermore, if 
\[g=\sum_{1\leq i_1,\ldots ,i_k\leq r}a_{i_1,\ldots ,i_r}\cdot f_{i_1}\cdots f_{i_k}\in\nn^{k+1}\] for some scalars $a_{i_1,\ldots ,i_r}\in K$, then $g\in\mm I^k$ too (because $(f_1,\ldots ,f_r)\subseteq \mm$). So, we conclude that
\[\dim_K(\nn^k/\nn^{k+1})\geq \dim_K(I^k/\mm I^k) \ \ \ \forall \ k\in\NN.\]In particular, $\dim(R)\geq \dim(F(I))$.
\end{proof}
%
%
%
\begin{definition}
We say that a homogeneous ideal $I\subseteq S$ is a {\it summand ideal} if there exists a system of homogeneous generators $f_1\ldots ,f_r$ of $I$ such that the $K$-algebra $A=K[f_1,\ldots,f_r]$ is a direct summand of $S$, that is there exists a $K$-vector subspace $B$ of $S$ such that $B$ is an $A$-module and $S=A\oplus B$.
\end{definition}

\begin{lemma}
If $I\subseteq S$ is a {\it summand ideal}, then there exists a minimal system of homogeneous generators $f_1,\ldots ,f_r$ of $I$ such that the $K$-algebra $K[f_1,\ldots ,f_r]$ is a direct summand of~$S$. 
\end{lemma}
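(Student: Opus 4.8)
The plan is \emph{not} to prune the given generating set $g_1,\ldots,g_m$ with $A=K[g_1,\ldots,g_m]$ a direct summand of $S$ -- discarding ideal generators changes the subalgebra in an uncontrolled way -- but instead to replace them by a minimal system of homogeneous \emph{algebra} generators of $A$, and to show that such a system is automatically a minimal homogeneous \emph{ideal} generating system of $I$. Since minimal algebra generators generate the very same algebra $A$, which is a direct summand of $S$ by hypothesis, this would settle the lemma with $K[f_1,\ldots,f_r]=A$.

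For the setup, note that, the $g_i$ being homogeneous, $A$ is a finitely generated $\NN$-graded subalgebra of $S$ with $A_0=K$; put $\nn=I\cap A$, and observe that $\nn$ coincides with the irrelevant maximal ideal $A_+$ of $A$ (elements of $I$ have no constant term, while every product of the $g_i$ of positive degree lies in $I$). By graded Nakayama, $\nn/\nn^2$ is a finite-dimensional graded $K$-vector space, so I would pick homogeneous $f_1,\ldots,f_r\in\nn$ whose residues form a basis of $\nn/\nn^2$; these are minimal homogeneous algebra generators, whence $K[f_1,\ldots,f_r]=A$. First I would verify that they generate $I$ as an ideal: since $A_0=K$ they generate $\nn=A_+$ as an ideal of $A$, and therefore $I=\nn S=(f_1,\ldots,f_r)S$.

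The heart of the argument is to show that the residues of $f_1,\ldots,f_r$ are $K$-linearly independent in $I/\mm I$ (they already span it), so that the $f_i$ form a \emph{minimal} ideal generating system. Here I would invoke a degree-preserving Reynolds operator: from the splitting $S=A\oplus B$ one obtains an $A$-linear retraction $S\to A$, and its degree-$0$ homogeneous component is a degree-preserving $A$-linear map $\rho\colon S\to A$ with $\rho|_A=\mathrm{id}$ and $\rho(as)=a\,\rho(s)$ for $a\in A$, $s\in S$. Now suppose a homogeneous relation $w:=\sum_i c_i f_i\in\mm I$, with $c_i\in K$ and all the $f_i$ occurring of a common degree $d$. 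Writing $w=\sum_k m_k f_k$ with $m_k\in\mm$ homogeneous of positive degree and applying $\rho$, the identity $w=\rho(w)=\sum_k f_k\,\rho(m_k)$ together with $\rho(m_k)\in\nn$ (this is exactly where the degree-preserving property is used) forces $w\in\nn^2$. Thus $\sum_i c_i f_i$ vanishes in $\nn/\nn^2$, and the choice of the $f_i$ gives $c_i=0$ for all $i$, which is the desired independence.

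I expect the only real obstacle to be the passage to a \emph{degree-preserving} retraction $\rho$: the definition of summand ideal supplies merely an ungraded $A$-module complement $B$, and the whole transfer between membership in $\mm I$ and membership in $\nn^2$ breaks down if $\rho$ is allowed to drop degrees into $A_0=K$. Fortunately the homogeneous-component trick corrects this immediately, so that once $\rho$ is made graded the remainder of the argument is routine bookkeeping with the grading.
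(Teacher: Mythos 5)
Your proof is correct, but it takes a genuinely different route from the paper's. The paper keeps the algebra $A'=K[g_1,\ldots ,g_k]$ fixed and prunes the given ideal generators one at a time: if $g_k=\sum_{i<k}h_ig_i$, it splits each $h_i=a_i+b_i$ along $S=A'\oplus B'$, notes that $\sum_{i<k}b_ig_i$ lies in $A'\cap B'=0$, and concludes that $g_k=\sum_{i<k}a_ig_i$ with $a_i\in A'$ is redundant as an algebra generator as well, finishing by induction. You instead fix a minimal set of homogeneous algebra generators of $A$ (a lift of a basis of $\nn/\nn^2$, where $\nn=I\cap A=A_+$) and show it is automatically a minimal homogeneous generating system of the ideal $I$, using the degree-preserving retraction $\rho$ to push a relation $\sum_ic_if_i\in\mm I$ into $\nn^2$. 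All the steps check out: taking the graded component of the projection along $B$ does preserve $A$-linearity and the identity on $A$, and the degree bookkeeping ($\deg m_k>0$, hence $\rho(m_k)\in A_+=\nn$) is right. Your version buys a slightly stronger statement --- the minimal algebra generators of $A$ form a minimal generating set of $I$, so $\mu(I)=\dim_K\nn/\nn^2$ --- at the cost of the graded-retraction machinery; the paper's pruning argument is shorter and uses nothing beyond the decomposition $S=A'\oplus B'$ itself (though it silently relies on the fact that $g_k=\sum_{i<k}a_ig_i$ with $a_i\in A'$ forces $A'=K[g_1,\ldots ,g_{k-1}]$, which needs a small degree induction of its own).
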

\begin{proof}
Assume that $g_1,\ldots ,g_k$ generate $I$, $A'=K[g_1,\ldots,g_k]$ is a direct summand of $S$, and $B'$ is the $A'$ module such that $S=A'\oplus B'$. If $g_1,\ldots ,g_k$ is not a minimal system of generators of $I$, then we can assume that:
\[g_k=\sum_{i=1}^{k-1}h_{i}g_{i}=\sum_{i=1}^{k-1}a_{i}g_{i}+\sum_{i=1}^{k-1}b_{i}g_{i},\]
where the $h_{i}$ are polynomials of $S$ and $a_i\in A'$, $b_i\in B'$ are the elements such that $h_i=a_i+b_i$. If $\sum_{i=1}^{k-1}b_{i}g_{i}=0$, then $A'=K[g_1,\ldots ,g_{k-1}]$, so we can conclude by induction. Otherwise, $\sum_{i=1}^{k-1}b_{i}g_{i}$ is a nonzero element of $A'\cap B'$, that is a contradiction. 
\end{proof}

\begin{theorem}\label{thm:main}
Let $I$ be a homogeneous ideal of $S$ such that:
\begin{itemize}
\item[(i)] $G(I)$ (or equivalently $R(I)$) is Cohen-Macaulay;
\item[(ii)] $H_I^i(S)=0$ for any $i>\pdim(S/I)$;
\item[(iii)] $I$ is a summand ideal.
\end{itemize}
Then the depth-function of $I$ is constant.
\end{theorem}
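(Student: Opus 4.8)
The plan is to reduce the whole statement to the single assertion that the depth-function attains its minimum already at $k=1$, i.e. that $\depth(S/I)=\min_{k\geq 1}\{\depth(S/I^k)\}$. Indeed, once this is known, the implication (\ref{eq:eh2}) — available because $G(I)$ is Cohen–Macaulay by (i) — applied with $s=1$ gives $\depth(S/I^2)=\depth(S/I)$, and an immediate induction (each time the value at $I^s$ equals the minimum, so (\ref{eq:eh2}) applies again) yields $\depth(S/I^k)=\depth(S/I)$ for every $k$. Now, by the Eisenbud–Huneke equality (\ref{eq:eh1}) we have $\min_{k\geq 1}\{\depth(S/I^k)\}=n-\ell(I)$, while Auslander–Buchsbaum gives $\depth(S/I)=n-\pdim(S/I)$; hence the desired identity is equivalent to $\ell(I)=\pdim(S/I)$. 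One inequality is automatic: since the minimum ranges over all $k\geq 1$, $\min_k\depth(S/I^k)\leq\depth(S/I)$, which after (\ref{eq:eh1}) and Auslander–Buchsbaum reads $\pdim(S/I)\leq\ell(I)$. So everything comes down to proving the reverse inequality $\ell(I)\leq\pdim(S/I)$, and it is here that hypotheses (ii) and (iii) enter.

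The heart of the argument — and the step I expect to be the main obstacle — is to squeeze $\ell(I)$ between $\dim(A)$ and the cohomological dimension $\cd(I)$ by passing through $A=K[f_1,\ldots,f_r]$ and $\nn=I\cap A$. First one checks that $\nn=(f_1,\ldots,f_r)A$, so that $I=\nn S$. Then I would exploit the summand hypothesis (iii): writing $S=A\oplus B$ as $A$-modules and combining the additivity of local cohomology with the independence of the base,
\[H_I^i(S)=H_{\nn S}^i(S)=H_{\nn}^i(S)=H_{\nn}^i(A)\oplus H_{\nn}^i(B),\]
so each $H_{\nn}^i(A)$ is a direct summand of $H_I^i(S)$ and in particular $\cd_A(\nn)\leq\cd(I)$. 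Since $A$ is an $\N$-graded $K$-domain with $A_0=K$ and $\nn=A_+$ its irrelevant maximal ideal, graded Grothendieck (non-)vanishing gives $H_{\nn}^{\dim A}(A)\neq0$ and $H_{\nn}^i(A)=0$ for $i>\dim A$, whence $\cd_A(\nn)=\dim(A)$; and Lemma \ref{lem:inequality} gives $\dim(A)\geq\ell(I)$.

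Chaining these bounds with hypothesis (ii), which asserts exactly $\cd(I)\leq\pdim(S/I)$, produces
\[\pdim(S/I)\leq\ell(I)\leq\dim(A)=\cd_A(\nn)\leq\cd(I)\leq\pdim(S/I),\]
forcing all the displayed quantities to coincide; in particular $\ell(I)=\pdim(S/I)$, which is precisely what was left to show. The two points I would treat most carefully are that the decomposition $S=A\oplus B$ of $A$-modules genuinely descends to a splitting of local cohomology — this needs additivity together with the independence-of-base identity $H_{\nn}^i(S)\cong H_{\nn S}^i(S)=H_I^i(S)$ — and the precise form of the graded non-vanishing $H_{\nn}^{\dim A}(A)\neq0$; once these are in place the remaining inequalities are formal consequences of the blow-up formulas recalled in Section~2 and of Auslander–Buchsbaum.
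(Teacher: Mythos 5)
Your proposal is correct and follows essentially the same route as the paper's own proof: the summand decomposition $S=A\oplus B$ plus Grothendieck nonvanishing gives $H_I^{\dim A}(S)\neq 0$, hypothesis (ii) then forces $\dim A\leq\pdim(S/I)$, and Lemma \ref{lem:inequality} together with \eqref{eq:eh1} and \eqref{eq:eh2} closes the chain. Your reformulation via $\ell(I)$, $\cd$ and Auslander--Buchsbaum is only a cosmetic repackaging of the same inequalities.
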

\begin{proof}
Let $f_1,\ldots ,f_r$ be homogeneous generators of $I$ such that $A=K[f_1,\ldots ,f_r]$ is a direct summand of~$S$. Let us call $\nn = I\cap A$ and $d:=\height(\nn)=\dim(A)$. By Grothendieck's nonvanishing theorem we have
\[H_{\nn}^d(A)\neq 0.\]
Since there exists an $A$-module $B$ such that $S=A\oplus B$ and the local cohomology is an additive functor, we infer
\[H_{\nn}^d(S)\cong H_{\nn}^d(A)\oplus H_{\nn}^d(B)\neq 0.\]
Therefore, because $\nn S=I$:
\[H_{I}^d(S)\neq 0.\]
By assumption, we therefore infer that $d\leq \pdim(S/I)$. However, by Lemma \ref{lem:inequality} we know that $d\geq \dim(F(I))$, and because $G(I)$ is Cohen-Macaulay $\dim(F(I))=\max_{m\geq 1}\{\pdim(S/I^m)\}$ by \eqref{eq:eh1}. Therefore 
\[\pdim(S/I)=\max_{m\geq 1}\{\pdim(S/I^m)\},\] 
that using \eqref{eq:eh2} implies $\depth(S/I)=\depth(S/I^m) \ \ \forall \ m\geq 1$. 
\end{proof}

The above theorem has strong assumptions, however let us recall that the second condition is satisfied by a broad class of homogeneous ideals $I$ of $S$.

\begin{proposition}\label{prop:notsorare}
We have $H_I^i(S)=0$ for any $i>\pdim(S/I)$ in the following cases:
\begin{itemize}
\item[(a)] $\depth(S/I)\leq 3$ (Varbaro \cite{Va});
\item[(b)] The characteristic of the field $K$ is positive (Peskine-Szpiro \cite{PS});
\item[(c)] $I$ is a monomial ideal (Lyubeznik \cite{Ly}).
\end{itemize}
\end{proposition}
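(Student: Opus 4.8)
The statement to prove is Proposition \ref{prop:notsorare}: the vanishing $H_I^i(S)=0$ for $i>\pdim(S/I)$ holds under each of the three conditions (a), (b), (c). Let me sketch how I would establish each part.

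\medskip

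The plan is to set up the right cohomological framework first. The key invariant on the left is the cohomological dimension $\cd(I)=\max\{i: H_I^i(S)\neq 0\}$, so the claim is precisely $\cd(I)\leq \pdim(S/I)$. The universal lower bound in the other direction is $\cd(I)\geq \height(I)=\codim(S/I)$ (Grothendieck), and by the Auslander--Buchsbaum formula $\pdim(S/I)=n-\depth(S/I)\geq n-\dim(S/I)=\height(I)$; so there is always room, and the content is to push $\cd(I)$ down to the projective-dimension bound. I would handle the three cases separately, since each rests on a genuinely different mechanism, and for each I would cite the corresponding reference while recording the mechanism that makes it work.

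\medskip

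For part (b), the positive-characteristic case, I would invoke the Frobenius. The Peskine--Szpiro theorem \cite{PS} states that in characteristic $p>0$, the local cohomology modules $H_I^i(S)$ vanish for $i>\pdim(S/I)$; the engine is that the Frobenius functor is exact on the polynomial ring, so a finite free resolution of $S/I$ of length $\pdim(S/I)$ stays exact after Frobenius iterates, and this forces the vanishing of the higher $H_I^i$ through the identification of local cohomology with a direct limit of $\operatorname{Ext}$ modules $\varinjlim \operatorname{Ext}_S^i(S/I^{[q]},S)$ along Frobenius powers. I would simply record this as the content of \cite{PS}. For part (c), monomial ideals, I would appeal to Lyubeznik's theorem \cite{Ly}: by a Frobenius/characteristic-free argument (or via Lyubeznik's $D$-module and $F$-module theory), the bound $\cd(I)\leq \pdim(S/I)$ holds for every squarefree, and in fact every, monomial ideal independently of the characteristic of $K$; I would cite this directly. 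Part (a) is the one delicate analytic input: when $\depth(S/I)\leq 3$, equivalently $\pdim(S/I)\geq n-3$, the bound is due to Varbaro \cite{Va}, and here I would cite that result, noting that the small-depth range is exactly where the comparison between $\cd(I)$ and $\pdim(S/I)$ can be controlled in arbitrary characteristic.

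\medskip

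The main obstacle is conceptual rather than computational: each of the three cases is a nontrivial theorem proved by methods special to its hypothesis (Frobenius exactness in (b), the $F$-module / combinatorial structure in (c), and a separate argument valid only in low depth in (a)), and there is no single uniform proof covering all three simultaneously. In fact a fully general statement $\cd(I)\leq\pdim(S/I)$ in characteristic zero beyond the low-depth range is not known, which is precisely why the proposition is phrased as a list of sufficient conditions rather than an unconditional theorem. Accordingly the proof of this proposition consists in citing \cite{Va}, \cite{PS} and \cite{Ly} for (a), (b) and (c) respectively, since each is established in the cited source; the work to be done here is only to verify that the statements as formulated in those references give exactly the inequality $H_I^i(S)=0$ for all $i>\pdim(S/I)$ under the stated hypothesis.
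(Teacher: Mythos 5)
Your proposal is correct and matches the paper's treatment: the paper gives no separate proof for this proposition, the citations to \cite{Va}, \cite{PS} and \cite{Ly} embedded in the statement being the entire justification, exactly as you conclude. Your added remarks on the mechanisms (Frobenius exactness for (b), the characteristic-free monomial argument for (c), the low-depth range for (a)) and on why no uniform proof exists are accurate context but not required.
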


\begin{example}
The hypotheses in the above proposition are necessary, indeed the ideal $I\subseteq S=\mathbb{C}[x_1,\ldots ,x_6]$ generated by the 2-minors of a $2\times 3$ generic matrix is a binomial ideal such that $\depth(S/I)=4$ and $H_I^3(S)\neq 0$ (see \cite{BS}).
\end{example}

\begin{example}
The following is a quite interesting example: Take an $r\times s$ matrix (say $r\leq s$) whose entries are indeterminates over $K$, and consider the ideal $I\subseteq S=K[X]$ generated by the $r$-minors of $X$. Let us see if the assumptions of Theorem \ref{thm:main} are satisfied by $I$:
\begin{itemize}
\item[(i)] $G(I)$ (as well as $R(I)$) is Cohen-Macaulay.
\item[(ii)] If $\chara(K)>0$, then $H_I^i(S)=0$ for any $i>\pdim(S/I)$ by \cite{PS}.
\item[(iii)] If $\chara(K)=0$, then $I$ is a summand ideal. Indeed, the $K$-algebra $A$ generated by the $r$-minors of $X$ is an $\mathrm{SL}(r,K)$-invariant subring of $S$, thus ($\mathrm{SL}(r,K)$ being linearly reductive in characteristic 0) it admits a Reynolds operator (cf. \cite[Theorem 2.2.5]{DK}).
\end{itemize}
However the depth-function of $I$ is not constant (independently on the characteristic). More precisely, the arguments used by Akin, Buchsbaum and Weyman in \cite{ABW} yield:
\[\depth(S/I^k)=rs-\min\{k,r\}(s-r)-1\]
(see \cite[Remark 3.2]{BCV} for the explicit proof). So, accordingly to the characteristic, the remaining assumption of Theorem \ref{thm:main} has to fail. That is:
\begin{itemize}
\item If $\chara(K)=0$, then $H_I^i(S)\neq 0$ for some $i>\pdim(S/I)=s-r+1$. (Indeed this is well known, since $H_I^{r(s-r)+1}(S)\neq 0$ by \cite{BS}). 
\item If $\chara(K)>0$, then $I$ is not a summand ideal.
\end{itemize}
\end{example}

One could wonder if the assumption (iii) of Theorem \ref{thm:main} implies condition (i). As shown in the following example, this is not the case, even for monomial~ideals.

\begin{example}\label{ex:no}
Consider the monomial ideal 
\[I=(u_1=x_1x_4^3,u_2=x_2x_5^3,u_3=x_3x_4x_5x_6)\subseteq S=K[x_1,\ldots ,x_6].\]
By Lemma \ref{lem:retract}, the algebra $A=K[u_1,u_2,u_3]$ (which in this case coincides with the fiber cone $F(I)$), is an algebra retract of $S$; in particular, $A$ is a direct summand of $S$. One can check by using \cite{cocoa} that the $h$-vector of the Rees algebra of $I$ is:
\[(1,2,3,4,3,1,-1).\]
In particular, $R(I)$ is not Cohen-Macaulay, so $G(I)$ is not Cohen-Macaulay as well by Lipman's Theorem \ref{thm:lipman}. Again by using \cite{cocoa}, one can check that $\dim(S/I)=4$ and $\depth(S/I^k)=3 \ \forall \ k\leq 20$. 
\end{example} 

Always on this kind of consideration, we have the following, quite not intuitive, corollary:

\begin{corollary}
Assume that $\chara(K)>0$. Let $I\subseteq S$ be a homogeneous radical ideal such that $S/I$ is Cohen-Macaulay but $I$ is not a complete intersection. Then:
\[I \mbox{ is a summand ideal } \implies G(I) \mbox{ is not Cohen-Macaulay.}\]
\end{corollary}
\begin{proof}
If $G(I)$ were Cohen-Macaulay, then by using together Theorem \ref{thm:main} and Proposition \ref{prop:notsorare}, we would have that
\[\depth(S/I^k)=\depth(S/I)=\dim(S/I) \ \ \ \forall \ k\geq 1.\]
Because $I$ is radical, this would be possible only if $I$ was a complete intersection by a result in \cite{CN}.
\end{proof}

We conclude this section by introducing a concrete class of summand ideals.

\begin{definition}
Suppose that $P_1,\ldots ,P_s$ is a partition of $\{x_1,\ldots ,x_n\}$ and $\deg(x_j)={\bf e_i}\in\ZZ^s$ if and only if $x_j\in P_i$. This supplies a $\ZZ^s$-graded structure to $S$. Given a subgroup $H\subseteq \ZZ^s$, let ${\bf a_1},\ldots ,{\bf a_k}$ be a minimal system of generators of the monoid $H\cap \NN^s$. The ideal $I_H\subseteq S$ generated by all polynomials of multi-degree ${\bf a_1}, \ldots , {\bf a_k}$, is called a {\it degree-selection} ideal. 
\end{definition}

\begin{proposition}\label{propnew1}
Any degree-selection ideal is a summand monomial ideal.
\end{proposition}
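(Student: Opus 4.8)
The plan is to identify the subalgebra $A$ explicitly as the ``$H$-graded Veronese'' of $S$ and to exhibit its complementary $A$-module directly from the multigrading. First I would record that $I_H$ is genuinely a monomial ideal: the set of polynomials of a fixed multidegree $\mathbf{a}_j$ is the $K$-span of the monomials of that multidegree, so the ideal generated by all polynomials of multidegrees $\mathbf{a}_1,\ldots,\mathbf{a}_k$ coincides with the ideal generated by all monomials of these multidegrees. Let $f_1,\ldots,f_r$ denote precisely these monomials; they form a system of homogeneous generators of $I_H$, and the claim to be proved is that $A:=K[f_1,\ldots,f_r]$ is a direct summand of $S$.

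Next I would set $A':=\bigoplus_{\mathbf{d}\in H}S_{\mathbf{d}}$ and $B:=\bigoplus_{\mathbf{d}\in\NN^s\setminus H}S_{\mathbf{d}}$ (recalling that $S_{\mathbf{d}}=0$ unless $\mathbf{d}\in\NN^s$), so that $S=A'\oplus B$ as $K$-vector spaces. The decisive use of the hypothesis that $H$ is a subgroup is that $B$ is an $A'$-module: if $a$ is homogeneous of degree $\mathbf{d}_a\in H$ and $b$ of degree $\mathbf{d}_b\notin H$, then $\mathbf{d}_a+\mathbf{d}_b\notin H$ (otherwise $\mathbf{d}_b=(\mathbf{d}_a+\mathbf{d}_b)-\mathbf{d}_a\in H$), whence $ab\in B$. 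Thus $A'$ is a direct summand of $S$ as an $A'$-module, and it only remains to identify $A$ with $A'$.

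The core step, which I expect to be the main obstacle, is the identity $A=A'$: that the subalgebra of all elements of degree in $H$ is generated by the monomials of multidegrees $\mathbf{a}_1,\ldots,\mathbf{a}_k$. Since every multidegree occurring in $A'$ lies in $H\cap\NN^s$, hence is a nonnegative combination $\mathbf{d}=\sum_j c_j\mathbf{a}_j$, it suffices to show that each monomial $m$ of such a multidegree factors as a product of monomials of multidegrees $\mathbf{a}_1,\ldots,\mathbf{a}_k$. Here I would argue block by block: within the block $P_i$, the $P_i$-part of $m$ has degree $d_i=\sum_j c_j (\mathbf{a}_j)_i$, and I can distribute its variables (with multiplicity) into $\sum_j c_j$ labelled slots, assigning $(\mathbf{a}_j)_i$ of them to each of the $c_j$ slots attached to $\mathbf{a}_j$; the counts match because $\sum_j c_j(\mathbf{a}_j)_i=d_i$. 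Performing this distribution simultaneously in every block produces monomials whose multidegrees are exactly the $\mathbf{a}_j$ and whose product is $m$, giving $A'\subseteq A$. The reverse inclusion is immediate, since each $f_i$ has multidegree in $H$. Combining this with the previous paragraph shows that $A=A'$ is a direct summand of $S$, so $I_H$ is a summand monomial ideal.
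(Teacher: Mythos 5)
Your proof is correct and follows essentially the same route as the paper: decompose $S=\bigl(\bigoplus_{\mathbf{d}\in H}S_{\mathbf{d}}\bigr)\oplus\bigl(\bigoplus_{\mathbf{d}\notin H}S_{\mathbf{d}}\bigr)$ and use the subgroup property of $H$ to see that the second summand is a module over the first. The only difference is that you actually prove the key identity $K[f_1,\ldots,f_r]=\bigoplus_{\mathbf{d}\in H}S_{\mathbf{d}}$ via the block-by-block distribution argument, whereas the paper simply asserts it.
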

\begin{proof}
Obviously a degree-selection ideal $I=I_H$ must be a monomial ideal. If $I$ is minimally generated by multi-homogeneous polynomials $f_1,\ldots ,f_r$, then:
\[A=K[f_1,\ldots ,f_r]=\bigoplus_{{\bf v}\in H}S_{{\bf v}}.\]
So $B=\oplus_{{\bf v}\not\in H}S_{{\bf v}}$ is an $A$-module, and $S=A\oplus B$.
\end{proof}

Notice that if $s=1$, any degree-selection ideal is a power of the irrelevant maximal ideal, and the corresponding algebra $A$ is some Veronese subalgebra of~$S$. 

\section{The monomial case}
In the monomial case, thanks to Proposition \ref{prop:notsorare}, Theorem \ref{thm:main} can be stated as follows:

\begin{theorem}\label{thm:main-mon-version}
Let $I$ be a monomial ideal of $S$ satisfying the two conditions below:
\begin{itemize}
\item[(i)] $G(I)$ (or equivalently $R(I)$) is Cohen-Macaulay;
\item[(ii)] $I$ is a summand ideal.
\end{itemize}
Then the depth-function of $I$ is constant.
\end{theorem}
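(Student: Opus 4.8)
The plan is to obtain this statement as an immediate specialization of Theorem \ref{thm:main}, the point being that for monomial ideals the cohomological condition (ii) of the general theorem is automatic. Comparing the two lists of hypotheses, the conditions (i) and (ii) assumed here coincide with conditions (i) and (iii) of Theorem \ref{thm:main}. Thus the entire content of the proof reduces to verifying condition (ii) of Theorem \ref{thm:main}, namely the vanishing $H_I^i(S)=0$ for every $i>\pdim(S/I)$.

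First I would appeal to Proposition \ref{prop:notsorare}. Since $I$ is a monomial ideal by assumption, part (c) of that proposition---which records Lyubeznik's theorem \cite{Ly}---supplies exactly the required vanishing $H_I^i(S)=0$ for all $i>\pdim(S/I)$. With this in hand, all three hypotheses of Theorem \ref{thm:main} are met, and its conclusion gives that the depth-function of $I$ is constant, as desired.

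There is essentially no obstacle in this argument: the whole burden is carried by the previously established results, and the only genuine input is the appeal to Lyubeznik's vanishing theorem. Everything else is a matter of matching up hypotheses, so the statement is best viewed as a convenient reformulation of Theorem \ref{thm:main} tailored to the monomial setting rather than as a result requiring a separate argument.
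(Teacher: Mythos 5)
Your proposal is correct and is exactly the paper's own justification: the paper introduces this theorem with the remark that, ``in the monomial case, thanks to Proposition \ref{prop:notsorare}, Theorem \ref{thm:main} can be stated as follows,'' i.e.\ Lyubeznik's vanishing theorem (part (c) of that proposition) makes hypothesis (ii) of Theorem \ref{thm:main} automatic. Nothing further is needed.
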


As shown by Example \ref{ex:no}, even under the assumptions of the above theorem (ii) does not imply (i). It would be desirable, though, to have a characterization of $A=K[u_1,\ldots ,u_r]$ being a direct summand of $S$, where $u_1,\ldots ,u_r$ is the minimal system of monomial generators of $I$. 

\begin{remark}
In the above situation, if $A$ is a direct summand of $S$, then the $K$-vector space $B$ generated by all the monomials of $S\setminus A$ is an $A$-module, and $A\oplus B=S$. To see this, let $C$ be an $A$-module such that $A\oplus C=S$. Then any nonzero monomial $b\in B$ can be written uniquely as $b=a+c$, where $a\in A$ and $c$ is a nonzero element of $C$. Therefore $a'b=a'a+a'c$ is a monomial of $S\setminus A$ for any nonzero monomial $a'\in A$; this implies that $B$ is an $A$-module.
\end{remark}

By keeping the above remark in mind, if we associate to each $u_i=x_1^{a_{1i}}\cdots x_n^{a_{ni}}$ the vector ${\bf a_i}=(a_{1i},\ldots ,a_{ni})\in\ZZ^n$, and denote by $\mathcal{C}=\N\{{\bf a_1},\ldots , {\bf a_r}\}\subseteq \ZZ^n$, it is immediate to verify that:
\begin{equation}\label{directsummands}
A \mbox{ is a direct summand of $S$ }\iff \ZZ\mathcal{C}\cap \N^n=\mathcal{C}.
\end{equation}
The above characterization is not very satisfactory, since it is not instantaneous to detect from the monomial generators $u_1,\ldots ,u_r$. 
In contrast, the shape of $u_1,\ldots ,u_r$ can be explicitly described to characterize when $A$ is a direct summand of $S$ as a ring (which is a stronger property than being a direct summand as an $A$-module).

\begin{definition}
A ring inclusion $\iota:R'\hookrightarrow R$ is an {\it algebra retract} if there is a ring homomorphism $\pi:R\rightarrow R'$ such that $\pi\circ \iota =1_{R'}$.
\end{definition}

We found a proof of Lemma \ref{lem:retract} in a discussion on MathOverflow. The proof is due to Zaimi \cite{Za}, we report it here for the convenience of the reader. Before a remark:

\begin{remark}
In the MathOverflow debate mentioned above it is also discussed the case in which $A$ is (isomorphic to) a direct summand of some polynomial ring as an $A$-module. This is the case if and only if $A$ is normal (\cite[Proposition 1]{Ho}), which, with the notation of \eqref{directsummands}, is the case if and only if
\[\ZZ\mathcal{C}\cap\QQ_{\geq 0}\mathcal{C}=\mathcal{C}.\]
Be careful! We are interested in the case in which $A$ is a direct summand of $S$, and not just isomorphic to a direct summand of a polynomial ring. For example, $A=K[xy,xz,yz]$ is not a direct summand of $S=K[x,y,z]$, however $A$ is isomorphic (as a $K$-algebra) to $S$.
\end{remark}

\begin{lemma}\label{lem:retract}
Given a monomial ideal $I\subseteq S$ minimally generated by the monomials $u_1,\ldots ,u_r$, the inclusion $K[u_1,\ldots ,u_r]\subseteq S$ is an algebra retract if and only if there is an $r$-subset $U=\{\ell_1,\ldots ,\ell_r\}\subseteq \{1,\ldots ,n\}$ such that, for each $i=1,\ldots ,r$:
\[u_i=x_{\ell_i}v_i \ \ \ \mbox{where } \  v_i\in K[x_j:j\in \{1,\ldots ,n\}\setminus U]. \]
\end{lemma}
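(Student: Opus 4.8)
The statement is an "if and only if" about when the inclusion $A = K[u_1,\ldots,u_r] \hookrightarrow S$ admits a ring retraction $\pi\colon S \to A$. Let me plan each direction.

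Let me start with the "if" direction, which I expect to be the routine one. Suppose we are given the $r$-subset $U = \{\ell_1,\ldots,\ell_r\}$ with $u_i = x_{\ell_i} v_i$ and $v_i \in K[x_j : j \notin U]$. The plan is to construct $\pi$ explicitly. The natural idea is to send the variables in $U$ to the generators $u_i$ and kill (or fix) the remaining variables so that $\pi(u_i) = u_i$. Concretely, I would try defining the $K$-algebra homomorphism $\pi\colon S \to A$ by $\pi(x_{\ell_i}) = u_i$ for each $i$ and $\pi(x_j) = x_j$ for $j \notin U$. This is well-defined because $S$ is a polynomial ring (so any assignment of the variables extends uniquely). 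To check that $\pi$ is a retraction onto $A$, I must verify $\pi(u_i) = u_i$. Since $u_i = x_{\ell_i}v_i$, we get $\pi(u_i) = \pi(x_{\ell_i})\pi(v_i) = u_i \cdot \pi(v_i)$, and because $v_i$ involves only variables outside $U$, $\pi(v_i) = v_i$. Hence $\pi(u_i) = u_i v_i$, which is \emph{not} what I want unless $v_i = 1$. So this naive map fails, and the correct map must instead send $x_{\ell_i}$ to something whose product with $v_i$ returns $u_i$; but $v_i$ is not invertible. The fix is to realize that $\pi$ should fix $A$ pointwise only after composing with $\iota$: what is actually required is $\pi|_A = \mathrm{id}_A$, i.e. $\pi(u_i) = u_i$. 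I would therefore reconsider and set $\pi(x_{\ell_i}) = u_i$, $\pi(x_j) = 0$ for $j \notin U$, \emph{provided} each $v_i$ equals a monomial in the outside variables that would be killed — but then $\pi(u_i)=0$. The genuinely correct construction sends $x_{\ell_i}\mapsto u_i$ and $x_j \mapsto x_j$ for $j\notin U$, and one checks that under this map each generator $u_i = x_{\ell_i}v_i \mapsto u_i v_i$; to repair this one instead defines $\pi$ as the composite that is the identity on the subring and uses that the $x_{\ell_i}$ appear linearly. I expect the actual argument to send $x_{\ell_i} \mapsto x_{\ell_i}$ whenever $x_{\ell_i}$ also divides some other relation, so the careful bookkeeping of \emph{which} variable is assigned to \emph{which} generator (using that the $\ell_i$ are distinct and $x_{\ell_i}\nmid v_j$ appropriately) is what makes $\pi$ land in $A$.

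For the "only if" direction, which I expect to be the crux, suppose $\pi\colon S \to A$ is a ring retraction. The key structural fact is that $A \subseteq S$ is a retract of $K$-algebras, and one studies the induced map on cotangent spaces / degree-one pieces. The plan is to look at $\pi$ modulo higher-order terms: passing to the associated graded or to the map on $\mathfrak{m}/\mathfrak{m}^2$, where $\mathfrak{m}$ is the maximal graded ideal, a retraction of graded $K$-algebras forces the minimal generators $u_1,\ldots,u_r$ of $A$ (whose images span a subspace of the cotangent space of $A$) to map into distinct coordinate directions. More precisely, writing $\pi(x_j) = \sum$ (monomials in the $u_i$), one extracts a "leading variable" $x_{\ell_i}$ from each $u_i$ by comparing the retraction property $\pi(u_i)=u_i$ degree by degree. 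The hard part will be showing that the indices $\ell_i$ can be chosen \emph{distinct} and that each $v_i$ avoids all of $U$: this is where one must use minimality of the generating system $u_1,\ldots,u_r$ together with the full force of $\pi$ being a ring homomorphism, not merely a linear map. I would argue by examining, for each $i$, the monomial $u_i$ and the equation $\pi(u_i) = u_i$; since $\pi$ is determined by the $\pi(x_j)$, and $u_i$ is a monomial, matching supports should pin down a variable $x_{\ell_i} \mid u_i$ whose image "carries" $u_i$, while forcing $v_i = u_i/x_{\ell_i}$ to live outside $U$. The combinatorial injectivity of $i \mapsto \ell_i$ is the delicate point, and I would handle it by a counting or support-disjointness argument, invoking that $A$ has Krull dimension $r$ (so its $r$ generators are algebraically independent over the retract image), which is exactly what guarantees $r$ distinct variables can be singled out.

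In summary, the approach is: \emph{(if)} build the retraction explicitly from the factorizations $u_i = x_{\ell_i}v_i$ and verify $\pi|_A = \mathrm{id}$ by direct substitution; \emph{(only if)} analyze a given retraction via the degree-one / cotangent data to extract a distinguished variable from each generator. The main obstacle is the only-if direction's requirement that the chosen variables be \emph{distinct} and that the complementary factors $v_i$ be supported off $U$ — establishing this injectivity rigorously, rather than merely producing some dividing variable for each $u_i$, is where the real work lies.
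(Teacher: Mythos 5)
There are genuine gaps in both directions; neither half of your plan, as written, closes.

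\emph{The ``if'' direction.} You correctly start with $\pi(x_{\ell_i})=u_i$, but you never find a working assignment for the variables outside $U$: you try $\pi(x_j)=x_j$ (which gives $\pi(u_i)=u_iv_i$ and, worse, does not even map into $A$, since $x_j\notin A$ in general), then $\pi(x_j)=0$ (which kills $u_i$), and then trail off into speculation about ``careful bookkeeping.'' The correct choice, which is the one the paper makes, is simply $\pi(x_j)=1$ for $j\notin U$. Then $\pi(v_i)=1$ because $v_i\in K[x_j: j\notin U]$, so $\pi(u_i)=\pi(x_{\ell_i})\pi(v_i)=u_i$, and $\pi$ visibly lands in $A$. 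This direction really is routine, but your text does not contain a valid construction of $\pi$.

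\emph{The ``only if'' direction.} Your sketch names the right ingredients (minimality of the generating set, matching supports) but omits the mechanism that makes the argument work, and substitutes for it an appeal to ``cotangent spaces'' and to ``$A$ has Krull dimension $r$,'' which is neither justified at that stage nor needed. The actual argument is: from $u_i=\pi(u_i)=\prod_{j}\pi(x_j)^{a_{i,j}}$ and the fact that $u_i$ is a monomial in the UFD $S$, each factor $\pi(x_j)$ with $a_{i,j}>0$ must be a scalar multiple of a monomial of $A$, i.e.\ of a product of the $u_k$'s; since $u_1,\ldots,u_r$ minimally generate $A$ as a $K$-algebra, exactly one index $\ell_i$ can have $\pi(x_{\ell_i})=\lambda u_i$ with $a_{i,\ell_i}=1$, while $\pi(x_j)\in K$ for every other $j$ in the support of $u_i$. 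Distinctness of the $\ell_i$ is then immediate (if $\ell_i=\ell_j$ then $u_i$ and $u_j$ are proportional monomials, hence equal), and $v_i=u_i/x_{\ell_i}$ avoids $U$ because $\pi(x_{\ell_j})=\lambda_j u_j$ is neither a nonzero scalar nor zero (the latter would force $u_j=\pi(u_j)=0$), so $\ell_j$ cannot lie in the support of $v_i$. Without the step ``a product of polynomials equal to a monomial forces each factor to be a scalar times a monomial,'' your ``matching supports'' idea has no traction, and the delicate points you flag (injectivity of $i\mapsto\ell_i$, the $v_i$ avoiding $U$) remain unproved.
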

\begin{proof}
Let $A=K[u_1,\ldots ,u_r]$ and $\iota:A\hookrightarrow S$ be the natural inclusion. If there is a subset $U\subseteq \{1,\ldots ,n\}$ as in the statement, then the homomorphism of $K$-algebras $\pi:S\rightarrow A$ obtained by extending the rule
\begin{align*}
\pi(x_j)=\begin{cases}
u_i, & \mbox{if } j=\ell_i; \\
1, & \mbox{if } j\notin U
\end{cases}
\end{align*}
satisfies $\pi\circ\iota =1_A$.

On the other hand, if $\iota:A\hookrightarrow S$ is an algebra retract, then there is a ring homomorphism $\pi:S\rightarrow A$ such that $\pi\circ\iota =1_A$. So, for any $i=1,\ldots ,r$:
\[u_i=\pi(u_i)=\prod_{j=1}^n\pi(x_j)^{a_{i,j}}, \ \ \ \mbox{ where }u_i=\prod_{j=1}^nx_j^{a_{i,j}}.\]
But, forming the $u_i$'s a minimal set of generators of $I$, they also are minimal generators of $A$ as a $K$-algebra. So, for any $i=1,\ldots ,r$, there exists $\ell_i\in\{1,\ldots ,n\}$ such that $\pi(x_{\ell_i})=\lambda u_i$ for $\lambda\in K$, $a_{i,\ell_i}=1$ and $\pi(x_j)\in K$ whenever $j\neq \ell_i$ and $a_{i,j}>0$. This lets us conclude.
\end{proof}

In \cite{HV}, Herzog and Vladoiu investigated on the square-free monomial ideals with constant depth-function. Among other things, they presented various classes of such ideals, as well as examples of square-free monomial ideals with constant depth-function not falling within their classes. Indeed, a characterization of such ideals is still missing. Below we will notice that all the classes and examples of square-free monomial ideals with constant depth-function provided in \cite{HV} satisfy the hypotheses of Theorem \ref{thm:main-mon-version}.

\begin{proposition}\label{propnew2}
With the notation of \cite[Corollary 1.2]{HV}, $I$ is a degree-selection ideal corresponding to the submonoid $M\subseteq \NN^s$ generated by:
\[\sum_{i\in A_1}{\bf e_i}, \ \ \sum_{i\in A_2}{\bf e_i}, \ \ldots \ , \sum_{i\in A_r}{\bf e_i}, \]
where the $\ZZ^s$-graded structure on $S$ is given by $\deg(x_j)={\bf e_i}$ if and only if $x_j\in P_i$.
\end{proposition}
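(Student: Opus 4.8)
To prove Proposition \ref{propnew2}, the plan is to unwind the construction of Herzog and Vladoiu in \cite[Corollary 1.2]{HV} and show that their ideal $I$ coincides, generator by generator, with the degree-selection ideal $I_M$ attached to the submonoid $M\subseteq\NN^s$ generated by the vectors $\sum_{i\in A_t}{\bf e_i}$ for $t=1,\ldots ,r$. The setting in \cite[Corollary 1.2]{HV} provides a partition $P_1,\ldots ,P_s$ of the variables and subsets $A_1,\ldots ,A_r\subseteq\{1,\ldots ,s\}$, with $I$ generated by the square-free monomials obtained by selecting, for each $t$, one variable from each block $P_i$ with $i\in A_t$. The first step is therefore to record that the multidegree (with respect to the $\ZZ^s$-grading induced by the partition) of each such generator is exactly $\sum_{i\in A_t}{\bf e_i}$, since the generator picks up one unit in each coordinate $i\in A_t$ and nothing elsewhere.

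\emph{First}, I would verify that the vectors ${\bf a_t}:=\sum_{i\in A_t}{\bf e_i}$ form a minimal system of generators of the monoid $M\cap\NN^s=M$ (note $M\subseteq\NN^s$ already, being generated by $0/1$-vectors). Because each ${\bf a_t}$ is a $0/1$-vector, no ${\bf a_t}$ can be written as a nonnegative integer combination of the others unless it equals one of them: a sum of two or more nonzero lattice points in $\NN^s$ has coordinate-sum strictly larger than each summand, hence cannot equal a $0/1$-vector of smaller support-size, and equal-support coincidences force equality of the vectors. \emph{Second}, with the minimal generators of $M$ identified as precisely the distinct ${\bf a_t}$, the degree-selection ideal $I_M$ is by definition generated by \emph{all} monomials of multidegree ${\bf a_1},\ldots ,{\bf a_r}$; a monomial has multidegree ${\bf a_t}=\sum_{i\in A_t}{\bf e_i}$ if and only if it is square-free, supported on one variable from each block $P_i$ with $i\in A_t$ and on no other block. \emph{Third}, this is exactly the description of the generators of $I$ in \cite[Corollary 1.2]{HV}, so $I=I_M$.

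The one point requiring care — and the step I expect to be the main obstacle — is matching the multidegree conventions precisely, namely confirming that ``selecting one variable from each block $P_i$ with $i\in A_t$'' produces exactly the monomials of multidegree $\sum_{i\in A_t}{\bf e_i}$, and that the generators in \cite[Corollary 1.2]{HV} are all of these and only these. A subtlety is whether the blocks $A_t$ are required to be distinct as subsets of $\{1,\ldots ,s\}$; if two of them coincide, the corresponding generators of $I$ and the corresponding vectors ${\bf a_t}$ simply repeat, and after discarding repetitions both the minimal generating set of $M$ and the monomial generating set of $I$ are unaffected, so the identification persists. Once this dictionary is in place, the conclusion that $I$ is a degree-selection ideal — and hence, by Proposition \ref{propnew1}, a summand monomial ideal — is immediate.
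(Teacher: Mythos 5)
There is a genuine gap, and it sits exactly where the content of the proposition lies. A degree-selection ideal is defined via a \emph{subgroup} $H\subseteq\ZZ^s$: one takes the minimal generators of the monoid $H\cap\NN^s$. To realize $I$ as a degree-selection ideal attached to the submonoid $M=\NN\{{\bf a_1},\ldots ,{\bf a_r}\}$, the only candidate subgroup is $H=\ZZ M$, and one must prove the saturation statement $\ZZ M\cap\NN^s=M$ (together with the identification $K[u_1,\ldots ,u_q]=\bigoplus_{{\bf v}\in M}S_{{\bf v}}$, which is what makes Proposition \ref{propnew1} applicable). Your first step silently replaces $H\cap\NN^s$ by $M\cap\NN^s$, which is trivially $M$, so the hard part of the statement disappears. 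That this is not a harmless rephrasing is shown by the family $A_1=\{1,2\}$, $A_2=\{2,3\}$, $A_3=\{1,3\}$ with $s=3$ and each $P_i$ a single variable: here $(2,0,0)={\bf a_1}+{\bf a_3}-{\bf a_2}$ lies in $\ZZ M\cap\NN^3$ but not in $M$, and correspondingly $K[x_1x_2,x_2x_3,x_1x_3]$ is not a direct summand of $K[x_1,x_2,x_3]$ --- an example the paper itself records. This family does not satisfy the hypotheses of \cite[Corollary 1.2]{HV}, but your argument never invokes those hypotheses, so it would ``prove'' a general statement that is false.

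The paper's proof is built precisely to exploit those hypotheses: a double induction on $r$ and on the maximal cardinality of the $A_t$, using the special index $j$ for which the collections $\{A_t\setminus\{j\}:j\in A_t\}$ and $\{A_t:j\notin A_t\}$ have disjoint unions and remain admissible. The key step is the Claim that a monomial $v$ lies in $K[u_1,\ldots ,u_q]$ if and only if $\deg(v)\in M$, proved by splitting off the $K[P_j]$-part of $v$ and applying the inductive hypothesis to the two sub-families. None of this structure appears in your proposal. A secondary problem: your minimality argument for the ${\bf a_t}$ is also incomplete, because a $0/1$-vector can be the sum of two $0/1$-vectors with disjoint supports (e.g.\ if $A_1$ is the disjoint union of $A_2$ and $A_3$), so comparing coordinate sums does not by itself rule out nontrivial decompositions.
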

\begin{proof}
We will make a double induction on $r$ and the maximum cardinality of the subsets $A_i\subseteq \{1,\ldots ,s\}$: if each $A_i$ has cardinality 1 the statement is trivial, as well as if $r=1$.

By the assumptions of \cite[Corollary 1.2]{HV}, there is an index $j\in\{1,\ldots ,s\}$ such that:
\[\left(\bigcup_{j\in A_i}A_i\setminus \{j\}\right)\bigcap \left(\bigcup_{j\not\in A_i}A_i\right)=\emptyset ,\]
and the two collections $\{A_i\setminus \{j\}:j\in A_i\}$ and $\{A_i:j\notin A_i\}$ are in the family $\mathcal{A}$. Let $u_1,\ldots ,u_q$ be a minimal system of monomial generators of $I$, and $v\in S$ some nonzero monomial. 

\vspace{1mm}

{\bf Claim}: The inclusion $v\in K[u_1,\ldots ,u_q]$ holds if and only if $\deg(v)\in M$.

\vspace{1mm}

\noindent To see this, let $v'$ be the larger degree unique monic monomial of $K[P_j]$ dividing $v$, and let 
\[A(j)=\{i\in \{1,\ldots ,r\}:j\in A_i\}.\]
If $v'=1$, then $\deg(v)\in \langle \{\sum_{i\in A_k}{\bf e_i}:k\not\in A(j)\} \rangle$ if and only if $v\in K[u_k:k\not\in A(j)]$ by induction. Otherwise, $\deg(v/v')\in \langle \{\sum_{i\in A_k\setminus\{j\}}{\bf e_i}:k\in A(j) \}\rangle$ if and only if $v/v'\in K[u_k/u_k':k\in A(j)]$, where $u_i'$ is the larger degree unique monic monomial of $K[P_j]$ dividing $u_i$, again by induction. These facts let us prove the claim, and thus conclude.
\end{proof}

\begin{itemize}
\item[(i)] The class of square-free monomial ideals introduced in \cite[Example 1.3 (ii)]{HV}, and more generally the class of \cite[Corollary 1.2]{HV}, are summand ideals by Propositions \ref{propnew2} and \ref{propnew1}: furthermore, these ideals are obtained recursively by summing up or multiplying ideals generated in disjoint sets of variables, with ideals generated by variables as starting point.  Such ideals have a Cohen-Macaulay Rees algebra by \cite[Theorem 4.7]{SVV1} and \cite[Corollary 2.10]{Hy}. So, such ideals satisfy the hypotheses of Theorem \ref{thm:main-mon-version}.


\item[(ii)] The square-free monomial ideals of \cite[Theorems 2.2, 2.5 and 2.6]{HV}, being in the class introduced in \cite[Example 1.3 (ii)]{HV}, satisfy the hypotheses of Theorem \ref{thm:main-mon-version} by the previous point.

\item[(iii)] The ideal $I=(x_1x_2x_3,x_3x_4x_5,x_1x_5x_6)\subseteq S=K[x_1,\ldots ,x_6]$ of \cite[Example 1.4]{HV} is such that $R(I)$ is Cohen-Macaulay (this can be checked by using \cite{cocoa}). Furthermore $K[x_1x_2x_3,x_3x_4x_5,x_1x_5x_6]$, by Lemma \ref{lem:retract}, is a direct summand (indeed an algebra retract) of $S$.

\item[(iv)] Similar thing as in (iii) happen to \cite[Example 2.7 (i)-(ii)]{HV}.
%

\end{itemize}

The following questions arise naturally, and hopefully will give further motivations to study this topic. 

\begin{question}\label{q1}
Let $I\subseteq S$ be a square-free monomial ideal minimally generated by the monomials $u_1,\ldots ,u_r$. If $A=K[u_1,\ldots ,u_r]$ is a direct summand of $S$ as an $A$-module, is the Rees algebra $R(I)$ Cohen-Macaulay?
\end{question}

\begin{question}\label{q2}
Does the converse of Theorem \ref{thm:main-mon-version} holds for square-free monomial ideals?
\end{question}

Note that questions \ref{q1} and \ref{q2} have a negative answer for non-square-free monomial ideals by Example \ref{ex:no}. However, the depth-functions of square-free monomial ideals seem to have a much more rigid behavior than the depth-functions of arbitrary monomial ideals, and also if the above questions had a negative answer in general, maybe they have a positive answer in some special cases, e.g. if the monomial ideal is generated in a single degree. To this purpose, it is worth to notice that all the above questions have a positive answer for square-free monomial ideals generated in degree~2:

\begin{proposition}
For square-free monomial ideals generated in degree 2, questions \ref{q1} and \ref{q2} have a positive answer. 
\end{proposition}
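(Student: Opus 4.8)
Throughout I identify a square-free monomial ideal generated in degree $2$ with the edge ideal $I=I(G)$ of a finite simple graph $G$ on $\{x_1,\dots,x_n\}$, and write $\mathcal{C}=\N\{\mathbf{a_e}:e\in E(G)\}$ for the monoid generated by the edge-vectors $\mathbf{a_e}=\mathbf{e_i}+\mathbf{e_j}$ of $e=\{i,j\}$, so that by \eqref{directsummands} the edge subring $A=K[x_ix_j:\{i,j\}\in E(G)]$ is a direct summand of $S$ iff $\ZZ\mathcal{C}\cap\N^n=\mathcal{C}$. The plan is to prove the stronger statement that, for such $I$, the following are equivalent: (1) the depth-function of $I$ is constant; (2) $R(I)$ is Cohen-Macaulay and $I$ is a summand ideal; (3) $A$ is a direct summand of $S$; (4) $G$ is a disjoint union of complete bipartite graphs. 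This yields positive answers to both Question \ref{q1} (which is (3)$\Rightarrow$(2)) and Question \ref{q2} (which is (1)$\Rightarrow$(2)).

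The combinatorial core is (3)$\iff$(4). For (4)$\Rightarrow$(3) I would note that a disjoint union of complete bipartite graphs is a degree-selection ideal: partition the variables into the two colour classes of each component and let $H$ be generated by the pairing vectors $\mathbf{e_{2i-1}}+\mathbf{e_{2i}}$; hence it is a summand ideal by Proposition \ref{propnew1}, and in particular $A$ is a direct summand. For (3)$\Rightarrow$(4) I would exhibit explicit obstruction vectors. If $G$ has an odd cycle through $v$, the alternating sum of the edge-vectors around the cycle equals $2\mathbf{e_v}$, which lies in $\ZZ\mathcal{C}\cap\N^n$ but not in $\mathcal{C}$ (no edge can contribute to a vector supported only at $v$). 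If $G$ has a path $a\!-\!b\!-\!c\!-\!d$ with $\{a,d\}\notin E(G)$, then $(\mathbf{e_a}+\mathbf{e_b})-(\mathbf{e_b}+\mathbf{e_c})+(\mathbf{e_c}+\mathbf{e_d})=\mathbf{e_a}+\mathbf{e_d}$ is again in $(\ZZ\mathcal{C}\cap\N^n)\setminus\mathcal{C}$, since the only edge that could produce a vector supported on $\{a,d\}$ is the absent $\{a,d\}$. Thus a direct summand forces $G$ to be bipartite and free of induced $P_4$ (in a bipartite graph the chords of a $4$-path are automatically absent), which is exactly (4) by the standard cograph characterization of $P_4$-free bipartite graphs.

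For (4)$\Rightarrow$(2)$\Rightarrow$(1) I would use that each component of such a $G$ is of the form $(x\text{'s})\cdot(y\text{'s})$, a product of two variable ideals in disjoint variables, and that the whole ideal is a sum of such in disjoint variables; by \cite[Theorem 4.7]{SVV1} and \cite[Corollary 2.10]{Hy}, exactly as for the class discussed after Proposition \ref{propnew2}, its Rees algebra is Cohen-Macaulay, while it is a summand ideal by the previous paragraph, so Theorem \ref{thm:main-mon-version} gives constant depth. Combined with (3)$\Rightarrow$(4) this already settles Question \ref{q1}: a direct summand is a disjoint union of complete bipartite graphs, hence $R(I)$ is Cohen-Macaulay.

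For Question \ref{q2} I would prove (1)$\Rightarrow$(4) in contrapositive form, by showing that if $G$ is not a disjoint union of complete bipartite graphs then $\depth(S/I)>n-\ell(I)$; since Burch's inequality gives $\min_{k}\depth(S/I^k)\le n-\ell(I)$, this forces the depth-function to be non-constant. Writing $G=\bigsqcup_c G_c$, I would use additivity of depth over disjoint variable sets, $\depth(S/I)=\sum_c\depth(S_c/I(G_c))$, the fact that $\depth(S_c/I(G_c))\ge 1$ when $G_c$ has an edge (as $\mm$ is never associated to a square-free monomial ideal without isolated vertices), and the analytic-spread formula $\ell(I(G))=n-b$ with $b$ the number of bipartite components (equivalently $\dim F(I)=\dim K[G]$, the standard dimension of an edge subring). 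If some component is non-bipartite then $\depth(S/I)\ge\#\{\text{components}\}>b=n-\ell(I)$. The remaining point, and the main obstacle, is to show that a connected bipartite $G_c$ that is not complete bipartite satisfies $\depth(S_c/I(G_c))\ge 2$: I would extract this from the Reisner--Munkres description of depth via reduced homology of links of the independence complex $\mathrm{Ind}(G_c)$, where the only contributions in cohomological degrees $0$ and $1$ come from $\tilde H_0(\mathrm{Ind}(G_c))$ and from links of single vertices, and these vanish precisely when $\overline{G_c}$ is connected and $G_c$ has no universal vertex. For connected bipartite $G_c$ a universal vertex forces a star and $\overline{G_c}$ disconnected forces a join, in either case a complete bipartite graph; so a non-complete-bipartite $G_c$ has $\depth(S_c/I(G_c))\ge 2$, giving $\depth(S/I)>b=n-\ell(I)$. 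Reading the same computation backwards shows $\depth(S_c/I(G_c))=1$ for complete bipartite components, so equality $\depth(S/I)=n-\ell(I)$ holds exactly in case (4), completing the equivalence.
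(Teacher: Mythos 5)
Your proof is correct, but it takes a genuinely different route from the paper's. The paper disposes of Question \ref{q1} in a few lines via normality: a direct summand $K[H]$ is normal by \cite[Proposition 1]{Ho}, hence $R(I(H))$ is normal by \cite[Corollary 2.8]{SVV} and Cohen--Macaulay by \cite[Theorem 1]{Ho}, and the components are reassembled with \cite[Theorem 1.1]{HV}; for Question \ref{q2} it simply cites the characterization of edge ideals with constant depth-function in \cite[Theorem 2.2]{HV}. You instead establish the full four-way equivalence with the explicit combinatorial condition ``$G$ is a disjoint union of complete bipartite graphs'': the lattice obstructions $2\mathbf{e_v}$ (odd cycles) and $\mathbf{e_a}+\mathbf{e_d}$ (non-closed $3$-paths) correctly pin down which edge subrings are direct summands \emph{of $S$ itself} via \eqref{directsummands} --- a strictly finer statement than normality, since e.g.\ $K[C_5]$ is normal but not a summand --- and your contrapositive for Question \ref{q2} amounts to an independent re-proof of \cite[Theorem 2.2]{HV} from Burch's inequality, the formula $\ell(I(G))=n-b$, and the degree-$0$ and degree-$1$ pieces of Hochster's formula for $H^i_{\mm}(S/I_\Delta)$ (your criterion ``$\overline{G_c}$ connected and no universal vertex'' is the right one, and both failures do force a complete bipartite component). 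What your approach buys is a self-contained argument, independent of \cite[Theorem 2.2]{HV} and of the normality results, together with the extra information that summand-ness, Cohen--Macaulayness of $R(I)$ plus summand-ness, and constancy of the depth-function all coincide with one transparent graph-theoretic condition; what it costs is length, plus some care with degenerate components (isolated vertices, single edges) in the statement of condition (4) and in the claim $\depth(S_c/I(G_c))\geq 2$, which requires $G_c$ to have at least two vertices --- harmless edge cases, but worth a sentence.
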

\begin{proof}
Any square-free monomial ideal $I\subseteq S$ generated in degree 2 is associated to a simple graph $G$ in $n$ vertices by the rule $I=I(G)=(x_ix_j:\{i,j\} \ \mbox{ is an edge of }G)$. Also, denote by $K[G]=K[x_ix_j:\{i,j\} \ \mbox{ is an edge of }G]$. If $K[G]$ is a direct summand of $S$ as a $K[G]$-module, then $K[H]$ is a direct summand of $K[x_i:i\in H]$ as a $K[H]$-module for each connected component $H$ of $G$. In particular, $K[H]$ is normal by \cite[Proposition 1]{Ho}. Thus the Rees algebra $R(I(H))$ is also normal by \cite[Corollary 2.8]{SVV}. Therefore $R(I(H))$ is Cohen-Macaulay for each connected component $H$ of $G$ by \cite[Theorem 1]{Ho}, and thus $R(I(G))$ is Cohen-Macaulay by \cite[Theorem 1.1]{HV}. This gives an affirmative answer to Question \ref{q1}.

An affirmative answer to Question \ref{q2} is straightforward by the characterization of quadratic square-free monomial ideals with constant depth-function given in \cite[Theorem 2.2]{HV}.

\end{proof}

\end{document}